\documentclass{amsart}
\usepackage{amsmath}
\usepackage{amsthm}
\usepackage{amssymb}
\usepackage{color}

\setlength{\oddsidemargin}{0.5cm} 
\setlength{\evensidemargin}{0.5cm}
\setlength{\textwidth}{15cm} 
\setlength{\textheight}{21cm}
\setlength{\topmargin}{1cm}

\newtheorem{thm}{Theorem}
\newtheorem{ex}[thm]{Example}

\newtheorem{defi}[thm]{Definition}
\newtheorem{prop}[thm]{Proposition}
\newtheorem{rk}[thm]{Remark}

\newcommand{\rr}{{\mathbb{R}}}
\newcommand{\nn}{{\mathbb{N}}}
\newcommand{\e}{\varepsilon}
\newcommand{\vip}{\vskip.14cm}
\newcommand{\indiq}{{\bf 1}}
\newcommand{\dd}{{\rm d}}

\begin{document}

\title{On gelation for the Smoluchowski equation}

\author{Nicolas Fournier}
\address{Sorbonne Universit\'e, LPSM-UMR 8001, Case courrier 158, 75252 Paris Cedex 05, France.}
\email{nicolas.fournier@sorbonne-universite.fr}
\subjclass[2010]{45K05, 45M99}

\keywords{Coagulation, Smoluchowski equation, Gelation, Explosion.}

\begin{abstract}
Motivated by the recent results of Andreis-Iyer-Magnanini \cite{aim}, we
provide a short proof, revisiting the one of Escobedo-Mischler-Perthame \cite{emp}, 
that for a large class of coagulation kernels,
any weak solution to the Smoluchowski equation looses mass in finite time. 
The class of kernels we consider is essentially the same as the one of \cite{aim}:
homogeneous kernels of degree $\gamma>1$ not vanishing on the diagonal, or 
homogeneous kernels of degree $\gamma=1$ not vanishing on the diagonal with some additional logarithmic factor.
We also show that when $\gamma=1$, the power of the logarithmic factor ensuring gelation
may depend on the shape of the kernel.
\end{abstract}

\maketitle

\section{Introduction}
The Smoluchowski coagulation equation \cite{s} describes the time-evolution of the concentration $f_t(x)$
of particles with mass $x\in (0,\infty)$ at time $t\geq 0$, in an infinite particle system. 
Assuming that two particles with masses $x$ and $y$ coalesce
at rate $K(x,y)$ to produce a single particle with mass $x+y$, it naturally writes 
\begin{equation}\label{ce}
\partial_t f_t(x)=\frac{1}{2} \int_0^x K(y,x-y) f_t(y)f_t(x-y)\dd y
- f_t(x) \int_0^\infty K(x,y)f_t(y)\dd y.
\end{equation}
We refer to the review papers of Aldous \cite{a} and Lauren\c cot \cite{l}
and to the books of Banasiak-Lamb-Lauren\c cot \cite{bll,bll2}, which contain a lot of information on this
equation and its discrete version.
We of course expect that \eqref{ce} preserves mass, {\it i.e} that
$\int_0^\infty x f_t(x)\dd x= \int_0^\infty x f_0(x)\dd x$ for all $t\geq 0$.
However, this is not true if the kernel $K$ increases sufficiently quickly for large
masses. Some mass may be lost in finite time, due to the appearance of clusters with infinite mass.
This phenomenon is called \textit{gelation}.

\vip

Gelation is easily established for the multiplicative kernel $K(x,y)=xy$, and physicists conjectured
that gelation occurs for $\gamma$-homogeneous kernels
(that is, $K(\lambda x,\lambda y)=\lambda^\gamma K(x,y)$) with $\gamma>1$.
See Ziff \cite{z}, Leyvraz-Tschudi \cite{lt} and Hendriks-Ernst-Ziff \cite{hez}.
Leyvraz \cite{ley} and then Buffet-Pul\'e \cite{bp} rigorously proved gelation when 
$f_0=\delta_1$ and $K(x,y)=r(x)\indiq_{\{x=y\}}$ is a {\it diagonal} kernel, with
$r(x) \geq x \log^{2+\e}(e+x)$ for some $\e>0$.
Jeon \cite{j} was the first to prove rigorously, through probabilistic arguments, the existence of a gelling
solution when $K(x,y)\geq (xy)^{\gamma/2}$ with $\gamma>1$.
Escobedo-Mischler-Perthame \cite{emp} found a simpler deterministic argument, stronger
since they showed that {\it any} solution is gelling. They also
proved many estimates on the rate of decay of the mass and the profile of the solution.
Lauren\c cot \cite[Proposition 36]{l} showed that any solution is gelling 
for a class of kernels including $K(x,y)=\sqrt{xy}[\log(e+x)\log(e+y)]^{1+\e}$ with $\e>0$.

\vip

Using a probabilistic approach in the spirit of \cite{j},
Andreis-Iyer-Magnanini \cite{aim} recently proved
that gelling solutions exist for a wider class of kernels. They actually deal with the slightly more general
model introduced by Norris \cite{n2}. Concerning \eqref{ce},
they are able to show gelation for any $\gamma$-homogeneous kernel non vanishing near the diagonal with $\gamma>1$.
They can also deal with e.g. $K(x,y)=(x\land y) \log^{3+\e}(e+x\land y)$
with $\e>0$ and others, see Example \ref{ex} below and the two lines after. Such kernels were
(really) not covered by \cite{j,emp,l}.

\vip

Our goal is to revisit the proof of \cite{emp} to try include the kernels treated in \cite{aim}.
To this end, we slightly simplify and refine the arguments of \cite{emp} and show,
with a very short proof, that {\it any} solution looses mass in finite time,
for a class of kernels including $K(x,y)=(x\land y) \log^{2+\e}(e+x\land y)$.

\vip
We also show that the optimality of the power $2$ of the logarithmic term is debatable:
we find two $1$-homogeneous kernels $K_0(x,y)=(x\land y)(\frac{x\land y}{x\lor y}-\frac12)_+$
and $K_1(x,y)=x+y$ 
such that when $K(x,y)=K_0(x,y) \log^\alpha(e+x\land y)$, weak solutions are gelling if and only if
$\alpha>2$ for the mono-disperse initial condition $f_0=\delta_1$,
while when $K(x,y)=K_1(x,y) \log^\alpha(e+x\land y)$, 
weak solutions are gelling if and only if $\alpha>1$.

\vip

Since $K_0(x,y)=0$ if $x/y\leq 1/2$, the support of the solution $f_t$ starting from $f_0=\delta_1$ 
is naturally included in $2^\nn$, and we have $K_0(2^n,2^m)=0$ if $n\neq m$. Thus everything
happens as if $K_0(x,y)=(x/2)\indiq_{\{x=y\}}$. Hence gelation when 
$K(x,y)=K_0(x,y) \log^\alpha(e+x\land y)$ with $\alpha>2$ and $f_0=\delta_1$ was already known \cite{bp},
but they showed non-gelation only when $\alpha \leq 1$.

\vip

The complexity of $1$-homogeneous kernel is not new:
van Dongen-Ernst \cite[Section 6]{vde} and Hermann-Niethammer-Vel\'azquez \cite{hnv} showed, {\it via} 
rigorous and heuristic results, that the large-time behavior of (non-gelling) solutions depends on the precise 
shape of the kernel.

\section{Main result and proof}

We always assume at least that $K:(0,\infty)^2 \to \rr_+$ is measurable and symmetric, {\it i.e.} 
$K(x,y)=K(y,x)$.
The following definition is more or less classical, see e.g. Lauren\c cot \cite[Theorem 24]{l}.

\begin{defi}\label{dwe}
A measurable family $(f_t)_{t\geq 0}$ of nonnegative measures on $(0,\infty)$ is said
to be a weak solution to \eqref{ce} if for all $t\geq 0$,
\begin{equation}\label{cond}
\int_0^t \int_0^\infty \int_0^\infty K(x,y) f_s(\dd y)f_s(\dd x) \dd s <\infty,
\end{equation}
and if for all bounded measurable $\psi : (0,\infty) \to \rr_+$, all $t\geq 0$,
\begin{equation}\label{we}
\int_0^\infty \psi(x) f_t(\dd x) = \int_0^\infty \psi(x) f_0(\dd x)+ \frac12
\int_0^t \int_0^\infty\int_0^\infty \Delta_\psi(x,y)
K(x,y) f_s(\dd y)f_s(\dd x) \dd s,
\end{equation}
where $\Delta_\psi(x,y)=\psi(x+y)-\psi(x)-\psi(y)$.
\end{defi}

All the terms make sense in \eqref{we}: the first and second one belong to $[0,\infty)\cup\{+\infty\}$
and the last one is finite by \eqref{cond} and since $|\Delta_\psi|$ is bounded. 
Moreover, for any bounded measurable  $\psi : (0,\infty) \to \rr_+$
for which  ${\int_0^\infty \psi(x) f_0(\dd x)<\infty}$, the map $t\to \int_0^\infty \psi(x) f_t(\dd x)$ is continuous
from $[0,\infty)$ to $\rr_+$.

\vip

Norris \cite[Theorem 4.1]{n} found a very general existence result: it suffices that $K$ 
is continuous on $(0,\infty)^2$ and that 
there exists a continuous function $\theta : (0,\infty)\to(0,\infty)$, sublinear (that is $\theta(\lambda x)
\leq \lambda \theta(x)$ if $\lambda \geq 1$)
such that $\int_0^\infty \theta(x) f_0(\dd x) <\infty$,
$K(x,y)\leq \theta(x)\theta(y)$ and $\lim_{(x,y)\to\infty}  [\theta(x)\theta(y)]^{-1}K(x,y)=0$.
This applies to many physical kernels, possibly diverging for small and large masses.

\vip
For $f$ a nonnegative measure on $(0,\infty)$ and $k\in \rr$, we set 
$M_k(f)=\int_0^\infty x^k f(\dd x)$. If applying \eqref{we} with $\psi(x)=x$,
we would find that $M_1(f_t)=M_1(f_0)$ for all $t\geq 0$, because $\Delta_\psi=0$. 
But this is not licit, since 
$\psi(x)=x$ is not bounded. However, the following observation is very classical.

\begin{rk}\label{tr} For any weak solution $(f_t)_{t\geq 0}$ to \eqref{ce},
we have $M_1(f_t)\leq M_1(f_0)$ for all $t\geq 0$.
\end{rk}

\begin{proof} Fix $t\geq 0$. For $a>0$, we set $\psi_a(x)=x\land a$ and observe that
$\Delta_{\psi_a}(x,y)\leq 0$ for all $x,y\in (0,\infty)$. By \eqref{we}, this implies 
that $\int_0^\infty (x\land a) f_t(\dd x) \leq  \int_0^\infty (x\land a) f_0(\dd x) \leq M_1(f_0)$.
The conclusion follows, by monotone convergence.
\end{proof}

Here is our main result. See Remark \ref{hom} and Example \ref{ex} below for applications.

\begin{thm}\label{mr} Assume that
\begin{gather}\label{c2}
\hbox{there exist $x_0>0$ and $r>1$ such that }\;\;
\kappa:=\int_{x_0}^\infty [H(a)]^{-1/2}\dd a <\infty,\\
\hbox{where}\;\;H(a)=a \inf\{K(x,y) : x,y\in [a,r a]\}.\notag
\end{gather}
For any weak solution $(f_t)_{t\geq 0}$ to \eqref{ce} with  
$M_1(f_0)<\infty$ and $f_0((x_0,\infty))>0$, we have
$$
T_{gel}:=\inf\{t\geq0 : M_1(f_t)<M_1(f_0)\} \leq 2 \kappa^2 \Big(\frac r{r-1}\Big)^2 
\frac{M_1(f_0)}{[\int_{x_0}^\infty (x-x_0)f_0(\dd x)]^2}.
$$
\end{thm}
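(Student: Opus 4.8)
The plan is to work throughout on a fixed interval $[0,t]$ with $t<T_{gel}$, so that mass is conserved: $M_1(f_s)=M_1(f_0)=:M$ for all $s\le t$. The whole strategy is to show that this conservation cannot persist beyond the asserted time. For each $a\ge x_0$ I would track the truncated excess mass
$$v(a,s):=\int_0^\infty (x-a)_+ f_s(\dd x)=M_1(f_s)-\int_0^\infty (x\land a)\,f_s(\dd x),$$
which satisfies $0\le v(a,s)\le M_1(f_s)=M$. Applying the weak formulation \eqref{we} to the \emph{bounded} test function $\psi(x)=x\land a$ exactly as in the proof of Remark \ref{tr}, and using $M_1(f_s)=M$ for $s<T_{gel}$, I would first record the clean identity
$$v(a,s)-v(a,0)=\frac12\int_0^s\!\int_0^\infty\!\int_0^\infty \big[(x\land a)+(y\land a)-((x+y)\land a)\big]K(x,y)\,f_u(\dd y)f_u(\dd x)\,\dd u,$$
whose integrand is nonnegative. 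In particular, taking $a=x_0$ shows that $\mu_s:=v(x_0,s)$ is nondecreasing, so $\mu_s\ge\mu_0=\int_{x_0}^\infty(x-x_0)f_0(\dd x)=:m>0$.

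The heart of the argument is an a priori band estimate. The bracket above equals $a$ whenever $x,y\ge a$, and by definition $K(x,y)\ge H(a)/a$ on $[a,ra]^2$; restricting the double integral to this square gives $v(a,s)\ge \tfrac{H(a)}{2}\int_0^s f_u([a,ra])^2\,\dd u$. Since $v(a,s)\le M$, this forces $\int_0^t f_u([a,ra])^2\,\dd u\le 2M/H(a)$, and one Cauchy–Schwarz in the time variable upgrades it to the linear bound
$$\int_0^t f_u([a,ra])\,\dd u\le \sqrt{\tfrac{2Mt}{H(a)}},\qquad a\ge x_0.$$
This is precisely where gelation gets forced: coagulation among comparable masses keeps feeding $v(a,\cdot)$, yet the total mass $M$ is a hard ceiling, so the time-integrated occupation of each band $[a,ra]$ must be controlled at rate $H(a)^{-1/2}$.

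It then remains to assemble the band estimates into a bound on the full excess mass. I would write $\mu_s=\int_{x_0}^\infty f_s((a,\infty))\,\dd a$, decompose each tail dyadically as $f_s((a,\infty))=\sum_{k\ge0} f_s([r^k a,r^{k+1}a))$, integrate in $s$, insert the band estimate at level $r^k a$, and substitute $b=r^k a$ in each term. Using $\int_{x_0}^\infty H(r^k a)^{-1/2}\dd a=r^{-k}\int_{r^k x_0}^\infty H(b)^{-1/2}\dd b\le r^{-k}\kappa$ and summing the geometric series produces exactly
$$\int_0^t \mu_s\,\dd s\le \frac{r}{r-1}\,\kappa\,\sqrt{2Mt}.$$
Combining with the lower bound $\int_0^t\mu_s\,\dd s\ge mt$ gives $mt\le \tfrac{r}{r-1}\kappa\sqrt{2Mt}$, i.e. $t\le 2\kappa^2\big(\tfrac{r}{r-1}\big)^2 M/m^2$, for every $t<T_{gel}$; letting $t\uparrow T_{gel}$ yields the claim, with the sharp constant coming out of the rescaling step.

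I expect the main obstacle to be the rigorous justification of the identity for $v(a,\cdot)$, that is, legitimately trading the unbounded weight $(x-a)_+$ for the bounded weight $x\land a$. This is exactly the point where the hypothesis $M_1(f_s)=M$ on $[0,t]$ is indispensable and must be invoked carefully, in the spirit of Remark \ref{tr}. Once that identity is secured, the rest is a soft combination of the mass ceiling $v(a,\cdot)\le M$, a single time-Cauchy–Schwarz, and the dyadic-rescaling bookkeeping, so the proof should indeed be short.
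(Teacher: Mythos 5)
Your proof is correct and yields exactly the constant in the statement. Its engine coincides with the paper's: the bounded test function $\psi_a(x)=x\land a$, the pointwise bound $\Delta_{\psi_a}(x,y)K(x,y)\le -H(a)\indiq_{\{x,y\in[a,ra]\}}$, the resulting band estimate $\int_0^t H(a)[F_s(a)]^2\dd s\le 2M_1(f_0)$ where $F_s(a)=f_s([a,ra])$, and the observation that the excess mass $\mu_s=\int_{x_0}^\infty(x-x_0)f_s(\dd x)$ stays above $m:=\int_{x_0}^\infty(x-x_0)f_0(\dd x)>0$ on $[0,T_{gel})$. The assembly, however, is genuinely different. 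The paper works pointwise in time: it bounds $\mu_s\le\frac r{r-1}\int_{x_0}^\infty F_s(a)\dd a$ via the layer-cake inequality $\int_{x_0}^\infty\indiq_{\{x\in[a,ra]\}}\dd a\ge\frac{r-1}r(x-x_0)$, applies Cauchy--Schwarz in the mass variable $a$ with weights $[H(a)]^{\mp 1/4}$, and only then integrates in time against \eqref{tac1}, reaching \eqref{ib}. You instead apply Cauchy--Schwarz in the time variable, pointwise in $a$, and replace the layer-cake step by the $r$-adic tail decomposition $f_s((a,\infty))\le\sum_{k\ge0}F_s(r^ka)$ combined with the rescaling $\int_{x_0}^\infty[H(r^ka)]^{-1/2}\dd a\le r^{-k}\kappa$, the geometric series reproducing the factor $\frac r{r-1}$. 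The two routes are of comparable length and give the identical bound, but the paper's order of operations buys a little more: inequality \eqref{ib} is established for \emph{all} $t\ge0$ without ever invoking mass conservation, so it constrains the solution even after gelation (the excess mass $s\mapsto\mu_s$ is square-integrable on all of $(0,\infty)$), whereas your intermediate statements use $M_1(f_s)=M_1(f_0)$ from the very first identity and therefore live only on $[0,T_{gel})$. Finally, the justification you flag as the main obstacle is in fact immediate: by Remark \ref{tr} we have $M_1(f_s)\le M_1(f_0)<\infty$, so $v(a,s)=M_1(f_s)-\int_0^\infty(x\land a)f_s(\dd x)$ is just the pointwise identity $(x-a)_+=x-x\land a$ integrated against a finite-mass measure, and $M_1(f_s)=M_1(f_0)$ on $[0,T_{gel})$ follows from the definition of $T_{gel}$ combined with Remark \ref{tr}; no delicate limiting argument is needed.
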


\begin{proof}
For $a>0$, we set $\psi_a(x)=x\land a$. We have $\Delta_{\psi_a}(x,y)\leq 0$ for all $x,y>0$ and
$\Delta_{\psi_a}(x,y)=-a$ when $x,y\geq a$. Hence
$$
\Delta_{\psi_a}(x,y)K(x,y) \leq -a K(x,y)\indiq_{\{x,y\geq a\}}\leq -a K(x,y) \indiq_{\{x,y \in [a,r a]\}}\leq 
-H(a)\indiq_{\{x,y \in [a,r a]\}}.
$$
Applying \eqref{we} with $\psi_a$, we find that for all $t\geq 0$,
\begin{equation}\label{uez}
\int_0^\infty (x \land a) (f_t(\dd x)-f_0(\dd x)) \leq - \frac{H(a)}2\int_0^t \int_a^{r a}\int_a^{r a}
f_s(\dd y)f_s(\dd x) \dd s=- \frac{H(a)}2\int_0^t [F_s(a)]^2\dd s,
\end{equation}
where $F_s(a)=f_s([a,ra])$. Since $f_t$ is nonnegative, we conclude that
\begin{equation*}
\int_0^t H(a)[F_s(a)]^2 \dd s \leq  2 \int_0^\infty (x\land a) f_0(\dd x)\leq 2 M_1(f_0).
\end{equation*}
Multiplying this inequality by $[H(a)]^{-1/2}$ and integrating in  $a\in[x_0,\infty)$, using \eqref{c2},
we find
\begin{equation}\label{tac1}
\int_0^t \int_{x_0}^\infty [H(a)]^{1/2}[F_s(a)]^2 \dd a \dd s \leq  2 \kappa M_1(f_0).
\end{equation}
We next write $M_1(f_s)=A_s+B_s$, where 
$$
A_s=\int_0^{\infty}(x\land x_0)f_s(\dd x)\quad \hbox{and} \quad B_s=\int_{x_0}^\infty(x-x_0)f_s(\dd x).
$$
We have $A_s\leq A_0$ by \eqref{uez} (with $a=x_0$). Morover,
since for all $x\geq x_0$,
$$
\int_{x_0}^\infty \indiq_{\{x \in [a,ra]\}} \dd a 
= x-\max\Big\{x_0,\frac xr\Big\}
=\min\Big\{x-x_0,\frac{r-1}{r}x\Big\}\geq \frac{r-1}{r}(x-x_0),
$$ 
we have
$$
\int_{x_0}^\infty F_s(a) \dd a=\int_{x_0}^\infty \int_{a}^{ra} f_s(\dd x)\dd a
=\int_{x_0}^\infty \int_{x_0}^\infty \indiq_{\{x \in [a,ra]\}} \dd a f_s(\dd x)
\geq \frac {r-1}r B_s.
$$
Using the Cauchy-Schwarz inequality and \eqref{c2}, we get
\begin{equation*}
B_s^2\leq \Big(\frac r {r-1}\Big)^2
\Big[\int_{x_0}^\infty [H(a)]^{-1/4} [H(a)]^{1/4}F_s(a)\dd a \Big]^2
\leq \Big(\frac r {r-1}\Big)^2 \kappa \int_{x_0}^\infty [H(a)]^{1/2}[F_s(a)]^2 \dd a.
\end{equation*}
Integrating this inequality in $s\in [0,t]$ and using \eqref{tac1}, we conclude that for all $t\geq 0$,
\begin{equation}\label{ib}
\int_0^t B_s^2 \dd s\leq 2 \Big(\frac r {r-1}\Big)^2 \kappa^2 M_1(f_0).
\end{equation}
We have shown that $M_1(f_s)\leq A_0+B_s$, with $A_0=\int_0^{\infty}(x\land x_0)f_0(\dd x)$.
For all $s\in [0,T_{gel})$, we have $M_1(f_s)=M_1(f_0)$ and thus  
$B_s \geq M_1(f_0)-A_0=\int_{x_0}^\infty (x-x_0)f_0(\dd x)>0$, since  $f_0((x_0,\infty))>0$ by assumption.
Inserted in \eqref{ib} (with $t=T_{gel}$), this gives
$$
T_{gel} \Big[\int_{x_0}^\infty (x-x_0)f_0(\dd x)\Big]^2\leq 
2 \Big(\frac r {r-1}\Big)^2\kappa^2 M_1(f_0).
$$
\vskip-.7cm
\end{proof}
\vip\vip

\begin{rk}\label{hom}
Assume that $K:(0,\infty)^2 \to \rr_+$ is continuous, $\gamma$-homogeneous (i.e.
$K(\lambda x,\lambda y)=\lambda^\gamma K(x,y)$ for all $\lambda,x,y>0$) 
with $\gamma>1$, and that $K(1,1)>0$. Then \eqref{c2} is met
for any $x_0>0$.
\end{rk}

\begin{proof} By continuity, there is $\e>0$ such that
$$
\rho=\inf\{K(x,y) : x,y \in [1,1+\e]\}>0.
$$
Choosing $r=1+\e$ and using the homogeneity of $K$, one can check that
$$
H(a)=a \inf\{K(x,y) : x,y \in [a,ra]\}=a^{1+\gamma} \inf\{K(x,y) : x,y \in [1,r]\}  \geq \rho a^{1+\gamma}.
$$
Since $\gamma>1$, the conclusion follows.
\end{proof}

\begin{ex}\label{ex}
Fix $\gamma>1$, $\e>0$ and $\theta \geq 0$. If $K(x,y)\geq K_0(x,y)$, with (here $u_+=\max\{u,0\}$)
\begin{gather*}
K_0(x,y)=  (x\land y)^\gamma \Big(\frac{x\land y}{x\lor y}\Big)^{\theta} 
\quad \hbox{or}\quad  K_0(x,y)=  (x\land y) \Big(\frac{x\land y}{x\lor y}\Big)^{\theta}\log^{2+\e}(1+x\land y),\\
K_0(x,y)=  (x\land y)^\gamma \Big(\frac{x\land y}{x\lor y}-\frac 12\Big)_+^\theta
\quad \hbox{or}\quad K_0(x,y)=  (x\land y) 
\Big(\frac{x\land y}{x\lor y}-\frac 12\Big)_+^\theta\log^{2+\e}(1+x\land y),
\end{gather*}
then \eqref{c2} is met: any choice of $x_0>0$ and $r>1$ is suitable for the two first kernels, 
and any choice of $x_0>0$ and $r\in (1,2)$ is convenient for the two last ones.
\end{ex}

None of these four kernels are not covered by \cite{j,emp,l}. The first and third ones are covered by \cite{aim},
and they are able to treat the second and fourth ones, with a power $3+\e$ instead of $2+\e$
for the logarithmic term. Hence the improvement of the present work is not huge when compared to \cite{aim}.
However, we can show that {\it any} weak solution is gelling for those kernels, and the proof is very short.
Let us mention that we are far from being able to deal with kernels vanishing on the diagonal,
such as $K(x,y)=(x^{1/3}+y^{1/3})^2|x^{2/3}-y^{2/3}|$ mentioned in \cite[Table 1]{a}.

\section{About criticality}

By \cite[Propositions 33 and 36]{l}, 
the kernel $K(x,y)=(xy)^{1/2}\log^{\alpha/2}(e+x)\log^{\alpha/2}(e+y)$ is non-gelling if $\alpha \leq 1$,
but gelling if $\alpha>2$. However, we cannot decide what happens when $\alpha \in (1,2]$.
The same applies to $K(x,y)=(x\land y)\log^{\alpha}(e+x\land y)$.

\vip

All this concerns kernels that are $1$-homogeneous, that is $K(\lambda x,\lambda y)=\lambda K(x,y)$, 
up to some logarithmic factor. In Propositions \ref{log1} and \ref{log2} below, we find two 
$1$-homogeneous kernels 
$K_0$ and $K_1$ such that $K(x,y)=K_0(x,y)\log^{\alpha}(e+x\land y)$ is gelling if and only if $\alpha>2$
({\bf only when $f_0=\delta_1$}),
while $K(x,y)=K_1(x,y)\log^{\alpha}(e+x\land y)$ is gelling if and only if $\alpha>1$.
Hence  the critical exponent of the logarithmic factor may
depend on the precise shape of the kernel.

\begin{prop}\label{log1} Consider
$$
K(x,y)=(x \land y) \Big(\frac{x\land y}{x\lor y}-\frac12\Big)_+\log^\alpha (e+ x\land y).
$$

(i) If $\alpha>2$, any weak solution to \eqref{ce} such that $M_1(f_0)\in (0,\infty)$ is gelling.

\vip

(ii) If $\alpha\in (0,2]$, then any weak solution $(f_t)_{t\geq 0}$ to \eqref{ce} starting from $f_0=\delta_1$
is non-gelling.
\end{prop}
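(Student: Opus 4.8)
Part (i) is an immediate application of Theorem~\ref{mr}: the kernel is the fourth one of Example~\ref{ex} with $\theta=1$ and $2+\e$ replaced by $\alpha$, so the plan is just to re-examine \eqref{c2}. Fix $r\in(1,2)$ and $x_0>0$. For $x,y\in[a,ra]$ we have $x\land y\ge a$, $\frac{x\land y}{x\lor y}\ge\frac1r>\frac12$ and $\log^\alpha(e+x\land y)\ge\log^\alpha(e+a)$, whence $H(a)\ge\frac{2-r}{2r}\,a^2\log^\alpha(e+a)$ and $[H(a)]^{-1/2}\le C\,a^{-1}\log^{-\alpha/2}(e+a)$. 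The change of variable $u=\log(e+a)$ shows $\kappa=\int_{x_0}^\infty[H(a)]^{-1/2}\dd a<\infty$ exactly because $\alpha/2>1$, so \eqref{c2} holds; since $M_1(f_0)>0$ we may pick $x_0$ small enough that $f_0((x_0,\infty))>0$, and Theorem~\ref{mr} gives $T_{gel}<\infty$.

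Part (ii) is the real work. Because $K(x,y)=0$ unless $x$ and $y$ lie within a factor $2$, and $K(2^i,2^j)=0$ for $i\neq j$ while $K(2^i,2^i)=\frac{2^i}2\log^\alpha(e+2^i)=:\kappa_i$, a solution issued from $\delta_1$ should remain supported on $\{2^n:n\ge0\}$. Granting this and writing $f_t=\sum_n c_n(t)\delta_{2^n}$, testing \eqref{we} against indicators collapses the equation to the cascade $\dot c_n=\frac12\kappa_{n-1}c_{n-1}^2-\kappa_n c_n^2$ ($n\ge0$, $\kappa_{-1}:=0$) with $c_n(0)=\indiq_{\{n=0\}}$. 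Setting $u_n(t)=\int_0^t\kappa_nc_n^2\,\dd s$ and $v_n=2^nu_n$, integration gives $c_n=\indiq_{\{n=0\}}+\frac12u_{n-1}-u_n\ge0$ and $\sum_{n\le N}2^nc_n(t)=1-v_N(t)$, so the lost mass is exactly $\lim_{N\to\infty}v_N(t)$; moreover a short computation recasts the cascade as the closed system
\begin{equation*}
\dot v_n=\beta_n\,(v_{n-1}-v_n)^2\quad(n\ge0),\qquad v_{-1}\equiv1,\quad v_n(0)=0,\qquad \beta_n=\tfrac12\log^\alpha(e+2^n),
\end{equation*}
in which $0\le v_n\le1$, with $v_n$ nondecreasing in $t$ and nonincreasing in $n$. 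Non-gelation is then equivalent to $\lim_{n\to\infty}v_n(t)=0$ for all $t$.

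To prove the latter for $\alpha\le2$, the plan is to dominate $v_n$ by a self-similar supersolution $\bar v_n(t)=\Psi(n/R(t))$, where $\Psi$ is convex, decreasing, $\Psi(0)=1$, $\Psi(\infty)=0$, with $\sup_{\xi>0}\xi^{\alpha-1}|\Psi'(\xi)|<\infty$ (e.g. $\Psi(\xi)=(1+\xi)^{-p}$), and $R$ is increasing. Convexity yields $\bar v_{n-1}-\bar v_n\le R^{-1}|\Psi'((n-1)/R)|$, and using $\beta_n\lesssim n^\alpha$ the supersolution inequality $\dot{\bar v}_n\ge\beta_n(\bar v_{n-1}-\bar v_n)^2$ reduces, after setting $\xi=n/R$, to $R'R^{1-\alpha}\ge C_\Psi$ for a finite constant $C_\Psi$. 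For $\alpha<2$ this is solved by $R(t)=(1+ct)^{1/(2-\alpha)}$ and for $\alpha=2$ by $R(t)=e^{ct}$; in either case $R(t)<\infty$ for every finite $t$, which is exactly where $\alpha\le2$ enters. Since the $v_n$-system is cooperative (its right-hand side is nondecreasing in $v_{n-1}$ on $v_{n-1}\ge v_n$) and $\bar v_{-1}\ge1=v_{-1}$, $\bar v_n(0)\ge0=v_n(0)$, the comparison principle gives $v_n\le\bar v_n$, so $\lim_{n\to\infty}v_n(t)\le\lim_{n\to\infty}\Psi(n/R(t))=0$.

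Two points deserve care, and both lie in Part (ii). The first is the reduction step: that the support cannot leave the lattice must hold for \emph{every} weak solution, which I would establish either by a Gronwall estimate on $\int\chi\,\dd f_t$ for a bounded $\chi$ vanishing exactly on $\{2^n\}$, or by a uniqueness argument. The second, and genuinely decisive, point is the calibration in the previous paragraph. A naive estimate---pretending the whole mass sits at the moving front, where the clock runs at rate $\beta_n\sim n^\alpha$---would predict the cascade to reach infinity in time $\sum_n\beta_n^{-1}\sim\sum_n n^{-\alpha}$, hence gelation already for $\alpha>1$. This is wrong: as it cascades the mass spreads (\`a la Burgers, with the width of the profile growing like the front position itself), so the front is thick and advances more slowly, and this is precisely what moves the critical exponent from $1$ to $2$. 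The self-similar supersolution is designed to encode exactly this spreading, and arranging that $R(t)$ stays finite if and only if $\alpha\le2$ is the crux.
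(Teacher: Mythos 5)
Your part (i) is, in substance, the paper's own proof: verify \eqref{c2} and invoke Theorem \ref{mr} (the paper takes $r=3/2$, giving $H(a)=\frac16 a^2\log^\alpha(e+a)$). Your part (ii), however, takes a genuinely different route after the common reduction to the dyadic cascade $\dot c_n=\frac12\kappa_{n-1}c_{n-1}^2-\kappa_n c_n^2$. The paper argues by explicit bounds: an ODE comparison and induction give $\sup_{t\geq0}f_t(\{2^n\})\leq\log^{\alpha/2}(e+1)\,2^{-n}\log^{-\alpha/2}(e+2^n)$, this feeds a Gronwall estimate yielding $\int_0^\infty x\log(e+x)f_t(\dd x)\leq Ae^{Bt}$ (this is where $\alpha\leq2$ enters, via $u^{\alpha/2}v^{\alpha/2}\leq u^{\alpha-1}v$ for $0<u<v$), and mass conservation follows by dominated convergence since $(x\land y)K(x,y)\leq x\log(e+x)\,y\log(e+y)$. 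You instead pass to the cumulative variables $v_n$ and dominate them by a moving-front supersolution $\Psi(n/R(t))$. I checked your central computation: with $\Psi(\xi)=(1+\xi)^{-p}$ the supersolution inequality does reduce to $R'R^{1-\alpha}\geq C_\Psi$ (the shift from $\Psi'((n-1)/R)$ to $\Psi'(n/R)$ costs only a factor $4^{p+1}$ once $R\geq1$; for $\alpha\in(1,2]$ any $p>0$ makes $\sup_{\xi>0}\xi^{\alpha-1}|\Psi'(\xi)|$ finite, while for $\alpha\leq1$ one simply uses $n^{\alpha-1}\leq1$), and criticality of $\alpha=2$ appears as global-in-time solvability of $R'\geq cR^{\alpha-1}$. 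One technical point should be made rigorous: run the comparison on the equivalent system $\dot v_n=\beta_n\bigl((v_{n-1}-v_n)_+\bigr)^2$, legitimate since the true solution satisfies $v_{n-1}\geq v_n$; with the positive part the right-hand side is nondecreasing in $v_{n-1}$ and nonincreasing in $v_n$, and the triangular structure of the system allows a clean induction on $n$ by scalar ODE comparison. Your approach buys a structural explanation of the exponent $2$ (spreading of the front), the paper's buys shorter, fully explicit estimates.

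The one genuine gap is the step you flagged yourself: that \emph{every} weak solution started from $\delta_1$ stays supported on $2^\nn=\{2^n:n\in\nn\}$. Your fallback of a uniqueness argument is not viable, since uniqueness of weak solutions is not available for this kernel and the claim concerns every weak solution. But your first suggestion works, and more easily than you anticipate: test \eqref{we} with $\chi=\indiq_{(0,\infty)\setminus2^\nn}$. If $x,y\in2^\nn$ and $K(x,y)>0$, then necessarily $x=y$ (because $K(2^k,2^\ell)=0$ for $k\neq\ell$), so $x+y\in2^\nn$ and $\Delta_\chi(x,y)=0$; if instead $x\notin2^\nn$ or $y\notin2^\nn$, then $\chi(x)+\chi(y)\geq1\geq\chi(x+y)$, so $\Delta_\chi(x,y)\leq0$. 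Hence $\Delta_\chi K\leq0$ pointwise, and \eqref{we} gives $f_t((0,\infty)\setminus2^\nn)\leq f_0((0,\infty)\setminus2^\nn)=0$ for all $t$: no Gronwall is needed, only a sign. (The paper proves the same fact by induction over the dyadic intervals, testing with $\indiq_{(2^n,2^{n+1})}$.) With that step filled in, your argument is complete.
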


\begin{proof}
Point (i) follows from Theorem \ref{mr}: with the choice $r=3/2$, we have 
$H(a)=\frac16 a^2 \log^{\alpha}(e+a)$. Hence if $\alpha>2$, $\int_{x_0}^\infty [H(a)]^{-1/2}\dd a <\infty$
for all $x_0>0$. For (ii), we fix $\alpha \in (0,2]$ and a weak solution $(f_t)_{t\geq0}$
issued from $f_0=\delta_1$ and show in several steps the absence of gelation

\vip

{\it Step 1.} Here we prove that for any $t\geq 0$, $f_t((0,\infty)\setminus 2^\nn)=0$, 
where $2^\nn=\{2^n : n \in \nn\}$. 

\vip

To this end, we define $A_0=(0,1)$ and, for $n\geq 0$, $A_{n+1}=A_n \cup
(2^n,2^{n+1})$ and we 
show by induction that $f_t(A_n)=0$ for all $t\geq 0$.

\vip

When $n=0$, we apply \eqref{we} with $\psi=\indiq_{(0,1)}$. Since $\Delta_\psi(x,y)\leq 0$ for all $x>0,y>0$,
we conclude that $f_t((0,1))\leq f_0((0,1))=0$ for all $t\geq 0$.

\vip

We next fix $n\in \nn$ and assume that $f_t(A_n)=0$, so that
$f_t(\dd x)=\sum_{k=0}^n f_t(\{2^k\})\delta_{2^k}(\dd x) + \indiq_{\{x\geq 2^n\}}f_t(\dd x)$, for all $t\geq 0$. 
We want to 
show that $f_t((2^n,2^{n+1}))=0$ for all $t\geq 0$.
We apply \eqref{we} with $\psi=\indiq_{(2^n,2^{n+1})}$.
Using that $f_0((2^n,2^{n+1}))=0$ and ignoring the nonpositive terms, this gives
\begin{align*}
f_t((2^n,2^{n+1})) \leq& \frac 12 
\int_0^t \int_0^\infty\int_0^\infty \indiq_{\{x+y \in (2^n,2^{n+1})\}}K(x,y) f_s(\dd y)f_s(\dd x) \dd s
=I^n_t+2J^n_t+K^n_t,
\end{align*}
where
\begin{align*}
I^n_t=&\frac12 \sum_{k,\ell=0}^n \int_0^t \indiq_{\{2^k+2^\ell\in (2^n,2^{n+1})\}}
K(2^k,2^\ell) f_s(\{k\})f_s(\{\ell\}) \dd s,\\
J^n_t=& \frac12 \sum_{k=0}^n \int_0^t \int_{y\geq 2^n} \indiq_{\{2^k+y \in (2^n,2^{n+1})\}}
K(2^k,y) f_s(\dd y)f_s(\{k\}) \dd s,\\
K^n_t=&\frac 12 \int_0^t \int_{x\geq 2^n}\int_{y\geq 2^n}\indiq_{\{x+y \in (2^n,2^{n+1})\}}K(x,y) f_s(\dd y)f_s(\dd x) \dd s.
\end{align*}
First, $I^n_t=0$, since $K(2^k,2^\ell)=0$ for all $k\neq \ell$ and
and since $2^k+2^k \notin (2^n,2^{n+1})$ for all $k=0,\dots,n$. Next, $J^n_t=0$,
because $y \geq  2^n$ and $2^k+y <2^{n+1}$ imply that $k \leq n-1$, so that $2^k\leq y/2$ and thus $K(2^k,y)=0$.
Finally, $K^n_t=0$ since $x\geq 2^n$ and $y\geq 2^n$ imply that $x+y \geq 2^{n+1}$.

\vip

{\it Step 2.} By Step 1, we may write $f_t=\sum_{n\geq 0} f_t(\{2^n\}) \delta_{2^n}$.
We show here that for each $n\in\nn$, $t\mapsto  f_t(\{2^n\})$ is $C^1$ on $[0,\infty)$ and 
$$
\frac{\dd}{\dd t}f_t(\{2^n\})=-K(2^n,2^n)[f_t(\{2^n\})]^2+ \frac12 K(2^{n-1},2^{n-1})[f_t(\{2^{n-1}\})]^2 
\indiq_{\{n\geq 1\}}.
$$

If first $n\geq 1$, we apply \eqref{we} with $\psi(x)=\indiq_{\{x=2^{n}\}}$ and get,
since $f_0(\{2^{n}\})=0$,
$$
f_t(\{2^n\})=\frac12 \int_0^t \sum_{k,\ell \in \nn} [\indiq_{\{2^k+2^\ell=2^n\}}-\indiq_{\{2^k=2^n\}}-\indiq_{\{2^\ell=2^n\}}]
K(2^k,2^\ell)f_s(\{2^k\})f_s(\{2^\ell\}) \dd s.
$$
Since $K(2^k,2^\ell)=0$ for $k\neq \ell$, we conclude that
$$
f_t(\{2^n\})=\frac12 \int_0^t K(2^{n-1},2^{n-1})[f_s(\{2^{n-1}\})]^2\dd s-  
\int_0^t K(2^n,2^n)[f_s(\{2^n\})]^2 \dd s.
$$
Since $t\mapsto f_t(\{2^k\})$ is continuous for all $k\in \nn$ by Definition \ref{dwe},
the conclusion follows. If next $n=0$, we apply \eqref{we} with $\psi(x)=\indiq_{\{x=1\}}$ and get,
since $f_0(\{1\})=1$,
\begin{align*}
f_t(\{1\})=&1+\frac12 \int_0^t \sum_{k,\ell \in \nn} [\indiq_{\{2^k+2^\ell=1\}}-\indiq_{\{2^k=1\}}-\indiq_{\{2^\ell=1\}}]
K(2^k,2^\ell)f_s(\{2^k\})f_s(\{2^\ell\}) \dd s\\
=&1 -  \int_0^t K(1,1)[f_s(\{1\})]^2 \dd s
\end{align*}
as desired. We used that $K(1,2^k)=0$ for all $k\geq 1$.

\vip

{\it Step 3.}
We now prove that for any $n\in \nn$, 
$$
b_n:=\sup_{t\geq 0} f_t(\{2^n\}) \leq \frac{\log^{\alpha/2} (e+1)}{2^{n}\log^{\alpha/2}(e+2^n)}.
$$

We proceed by induction. If $n=0$, we know from Step 2 that $t\mapsto f_t(\{2^0\})$ is non-increasing,
whence $b_0 = f_0(\{2^0\})=1$. 
Fix $n\geq 1$ and assume that $b_{n-1}$ satisfies the desired estimate. Then
$$
\frac{\dd}{\dd t}f_t(\{2^{n}\})\leq -K(2^n,2^n)[f_t(\{2^n\})]^2+ \frac12 K(2^{n-1},2^{n-1})b_{n-1}^2
$$ 
by Step 2.
Since $f_0(\{2^{n}\})=0$, we classically conclude that
$$
b_n \leq \Big(\frac{ K(2^{n-1},2^{n-1})}{2K(2^n,2^n)} \Big)^{1/2}b_{n-1}
=\frac 12 \frac{\log^{\alpha/2}(e+2^{n-1})}{\log^{\alpha/2}(e+2^n)} b_{n-1}
\leq \frac{\log^{\alpha/2} (e+1)}{2^n\log^{\alpha/2}(e+2^n)}.
$$

{\it Step 4.} We show  that there are $A,B>0$ such that
for any $t>0$, $\int_0^\infty x \log(e+x) f_t(\dd x) \leq A e^{B t}$.

\vip

We fix $a>1$ and apply \eqref{we} with $\psi_a(x)=x\log(e+x) \indiq_{\{x\leq a\}}$. By Step
1 and since $K(2^k,2^n)=0$ if $k\neq n$, we find, introducing $A=\log (e+1)$,
\begin{align*}
\int_0^a x \log(1+x) f_t(\dd x) =& A + \frac12 \sum_{n\geq 0}\int_0^t \Delta_{\psi_a}(2^n,2^n)
K(2^n,2^n)f_s(\{2^n\}) f_s(\{2^n\})  \dd s.
\end{align*}
But
$$
\Delta_{\psi_a}(x,x)=\psi_a(2x)-2\psi_a(x)\leq \indiq_{\{x\leq a/2\}} 2x \log\Big(\frac{e+2x}{e+x}\Big)
\leq \indiq_{\{x\leq a\}} 2x \log2,
$$
whence,
\begin{align*}
\int_0^a x \log(e+x) f_t(\dd x) \leq& A + \log 2 \sum_{n\geq 0}\int_0^t 2^n \indiq_{\{2^n\leq a\}}
K(2^n,2^n)f_s(\{2^n\}) f_s(\{2^n\})  \dd s.
\end{align*}
Using the expression of $K(2^n,2^n)$ and  Step 3, we find
$$
K(2^n,2^n)f_s(\{2^n\})\leq 2^n \log^\alpha(e+2^n)  b_n\leq 
\log^{\alpha/2} (e+1) \log^{\alpha/2}(e+2^n)\leq \log^{\alpha-1} (e+1) \log(e+2^n),
$$
since $\alpha \in (0,2]$ (so that $0<u<v$ implies $u^{\alpha/2}v^{\alpha/2}\leq u^{\alpha-1}v$). 
Setting $B=  (\log 2) (\log^{\alpha-1} (e+1))$,
\begin{align*}
\int_0^a x\log(e+x) f_t(\dd x) \leq&  A + B \sum_{n\geq 0} \int_0^t 
\indiq_{\{2^n\leq a\}} 2^n \log(e+2^n)f_s(\{2^n\}) \dd s\\
= & A+B  \int_0^t \int_0^a x\log (e+x)f_s(\dd x) \dd s
\end{align*}
by Step 1 again. Thus $\int_0^a x\log(e+x) f_t(\dd x)\leq A e^{Bt}$ by the
Gronwall lemma, whence the result.
\vip

{\it Step 5.} We finally conclude that $M_1(f_t)=M_1(f_0)$ for all $t\geq 0$. 
By \eqref{we} with $\psi_a(x)=x\land a$,
$$
\int_0^\infty (x\land a) f_t(\dd x)=\int_0^\infty (x\land a)f_0(\dd x) + I_a(t),
$$
where
$$
I_a(t)=\int_0^t \int_0^\infty \int_0^\infty \Delta_{\psi_a(x,y)}K(x,y) f_s(\dd y)f_s(\dd x) \dd s.
$$
By monotone convergence, $\lim_{a\to \infty} \int_0^\infty (x\land a) f_t(\dd x)=M_1(f_t)$.
It thus suffices to check that $\lim_{a\to \infty} I_a(t)=0$ for each $t\geq0$.
For this we use dominated convergence:
since $\lim_{a\to \infty} \Delta_{\psi_a}(x,y)=0$ for all $x,y\in (0,\infty)$,
and since 
$$
\sup_{a>0}|\Delta_{\psi_a(x,y)}|=\sup_{a>0}(x\land a + y \land a -(x+y)\land a) 
\leq \sup_{a>0} ([x\land a] \land [y \land a])
= x\land y, 
$$
we only need that for all $t\geq0$,
\begin{equation}\label{tbru}
\int_0^t \int_0^\infty \int_0^\infty (x\land y) K(x,y)f_s(\dd y) f_s(\dd x) \dd s<\infty.
\end{equation}
But 
$$
(x\land y) K(x,y) \leq (x\land y)^2 \log^{\alpha} (e+x\land y)
\leq  (x\land y)^2 \log^{2} (e+x\land y)
\leq  x\log(e+x) \, y\log(e+y).
$$
Thus \eqref{tbru} follows from Step 4.
\end{proof}

\begin{prop}\label{log2}
(i) If $K(x,y)\leq (x + y) \log (e+ x\land y)$, then
any weak solution $(f_t)_{t\geq 0}$ to \eqref{ce} such that $M_1(f_0)+M_2(f_0)<\infty$
is non-gelling.
\vip
(ii) If $K(x,y)\geq (x + y) \log^\alpha (e+ x\land y)$ with some $\alpha>1$,
then any weak solution to \eqref{ce} such that $M_1(f_0)\in (0,\infty)$ and
$\int_0^1 x|\log x|\indiq_{\{x< 1\}} f_0(\dd x)<\infty$ is gelling.
\end{prop}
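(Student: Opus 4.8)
The plan is to handle the non-gelation part (i) by propagating the second moment and then upgrading truncated mass conservation to genuine mass conservation by a dominated-convergence argument identical in spirit to Step 5 of Proposition \ref{log1}, and to handle the gelation part (ii) by a dissipation estimate in the style of Theorem \ref{mr}, but sharpened so as to exploit the additive factor $x+y$. This sharpening is exactly what lowers the critical logarithmic power from $2$ to $1$, and it is the main conceptual content.

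For (i) the first step is an a priori bound on $M_2(f_t)$. Testing \eqref{we} against the bounded function $\psi_b(x)=x^2\indiq_{\{x\le b\}}$ and using $\Delta_{\psi_b}(x,y)\le 2xy\indiq_{\{x+y\le b\}}$ together with $K(x,y)\le (x+y)\log(e+x\land y)$, I obtain for $m_b(t):=\int_{(0,b]}x^2 f_t(\dd x)$ the closed inequality
$$
\frac{\dd}{\dd t}m_b(t)\le 2\Big(\int_{(0,b]}x^2 f_t(\dd x)\Big)\Big(\int_{(0,b]}y\log(e+y)f_t(\dd y)\Big)\le 2 m_b(t)\big(\delta\, m_b(t)+C_\delta M_1(f_0)\big),
$$
where the key point is that $x+y\le b$ forces $y\le b$, so no tail beyond $b$ survives, and $\log(e+y)\le \delta y+C_\delta$. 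A comparison with the ODE $\dot u=2\delta u^2+C'u$, whose blow-up time tends to $+\infty$ as $\delta\to0$, bounds $m_b(t)$ uniformly in $b$ on any $[0,T]$, and letting $b\to\infty$ gives $M_2(f_t)<\infty$ for all $t$. The second step is the domination $(x\land y)K(x,y)\le (x\land y)(x+y)\log(e+x\land y)\le 2xy\log(e+y)$, whence $\int_0^t\!\int\!\int (x\land y)K f_s f_s\le 2\int_0^t M_1(f_s)\big(\int y\log(e+y)f_s\big)\dd s<\infty$ by the moment bound. Since $\sup_{a>0}|\Delta_{\psi_a}(x,y)|\le x\land y$ and $\Delta_{\psi_a}\to0$ pointwise, dominated convergence in \eqref{we} with $\psi_a(x)=x\land a$ then yields $M_1(f_t)=M_1(f_0)$, exactly as in Step 5 of Proposition \ref{log1}.

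For (ii), assume for contradiction that some weak solution does not gel, so that $M_1(f_t)=M_1(f_0)=:m$ for all $t$. Fix $x_0>0$ with $f_0((x_0,\infty))>0$ and set $G_s(a)=f_s([a,\infty))$ and $\Phi_s(a)=\int_{[a,\infty)}x f_s(\dd x)$. Testing \eqref{we} against $\psi_a(x)=x\land a$ and retaining only the region $\{x,y\ge a\}$, where $\Delta_{\psi_a}=-a$ and $K(x,y)\ge (x+y)\log^\alpha(e+a)$, gives
$$
a\log^\alpha(e+a)\int_0^t \Phi_s(a)G_s(a)\,\dd s\le -\int_0^\infty (x\land a)\big(f_t(\dd x)-f_0(\dd x)\big)\le m .
$$
Dividing by $a\log^\alpha(e+a)$ and integrating in $a\in[x_0,\infty)$ (Tonelli), the right-hand side becomes $m\int_{x_0}^\infty [a\log^\alpha(e+a)]^{-1}\dd a$, which is finite precisely because $\alpha>1$. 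For the left-hand side, writing $V_s(a)=\int_a^\infty G_s(b)\,\dd b$ one has $\Phi_s(a)\ge V_s(a)$ and $V_s'(a)=-G_s(a)$, so $\int_{x_0}^\infty \Phi_s(a)G_s(a)\,\dd a\ge \int_{x_0}^\infty V_s(a)(-V_s'(a))\,\dd a=\tfrac12 V_s(x_0)^2=\tfrac12 B_s^2$, with $B_s=\int_{x_0}^\infty (x-x_0)f_s(\dd x)$. Hence $\tfrac12\int_0^t B_s^2\,\dd s\le m\int_{x_0}^\infty [a\log^\alpha(e+a)]^{-1}\dd a$ for all $t$. But \eqref{uez} with $a=x_0$ gives $\int_0^\infty(x\land x_0)f_t(\dd x)\le \int_0^\infty(x\land x_0)f_0(\dd x)$, so under the no-gelation assumption $B_s=m-\int_0^\infty(x\land x_0)f_s(\dd x)\ge \int_{x_0}^\infty(x-x_0)f_0(\dd x)=:\beta>0$; thus $\tfrac12\beta^2 t$ would stay bounded in $t$, which is absurd. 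This proves gelation, with an explicit bound on $T_{gel}$.

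The main conceptual point is the choice of functional in (ii): replacing the ``both comparable'' block $[a,ra]^2$ of Theorem \ref{mr} by the ``both large'' half-plane $\{x,y\ge a\}$ turns the loss rate into $a\log^\alpha(e+a)\,\Phi_s(a)G_s(a)$, and the factor $\Phi_s(a)$, the mass above $a$ and hence linear in the large variable, is precisely what lets the integration by parts recover $\tfrac12 B_s^2$ against the mild weight $[a\log^\alpha(e+a)]^{-1}$, integrable exactly when $\alpha>1$; the diagonal kernel of Proposition \ref{log1}, lacking the factor $x+y$, yields only the weaker weight $[a^2\log^\alpha(e+a)]^{-1/2}$ of Theorem \ref{mr} and hence the threshold $\alpha>2$. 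The main technical obstacle lies rather in (i): the differential inequality for $M_2$ is a priori only formal, and the delicate point is to run it rigorously on truncations without the tail $\int_{y>b}y\log(e+y)f_s$ reappearing, which the confinement $x+y\le b$ and the $\delta$-dependent comparison achieve. I note that this route to (ii) uses only $M_1(f_0)\in(0,\infty)$ and $f_0((x_0,\infty))>0$; the hypothesis $\int_0^1 x|\log x|\indiq_{\{x<1\}}f_0(\dd x)<\infty$ does not seem to enter, and would at most be convenient for an alternative argument based on small-large interactions.
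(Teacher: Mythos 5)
Your part (i) follows the paper's structure (propagation of the truncated second moment, then the domination/dominated-convergence step), but the Gronwall step contains a genuine gap. In the splitting $\log(e+y)\leq\delta y+C_\delta$, the best constant is $C_\delta=\log(1/\delta)-1+e\delta$, so $C_\delta\to\infty$ as $\delta\to0$, and hence the coefficient $C'=2C_\delta M_1(f_0)$ in your comparison equation $\dot u=2\delta u^2+C'u$ blows up as well. Its blow-up time is $t^*(\delta)=\frac{1}{C'}\log\bigl(1+\frac{C'}{2\delta M_2(f_0)}\bigr)$, and since $C'\sim 2M_1(f_0)\log(1/\delta)$, one finds $t^*(\delta)\to\frac{1}{2M_1(f_0)}$ as $\delta \to 0$: a finite limit, not $+\infty$ as you claim (and independent of $M_2(f_0)$). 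So your argument bounds $M_2(f_t)$ only up to times of order $1/(2M_1(f_0))$ and breaks down for larger $T$; any power splitting $\log(e+y)\leq C_\e(1+y^\e)$ hits the same obstruction. Two repairs are available. (a) Iterate in time: the admissible step, say $\frac{1}{4M_1(f_0)}$, depends only on $M_1$, which is non-increasing by Remark \ref{tr}; since $(f_{t_k+t})_{t\geq0}$ is again a weak solution whose initial datum $f_{t_k}$ has finite second moment by the previous step, finitely many steps cover any $[0,T]$. (b) Follow the paper: Jensen's inequality for the probability measure proportional to $y\indiq_{\{y\leq a\}}f_s(\dd y)$ gives $\int_0^a y\log(e+y)f_s(\dd y)\leq M_1(f_0)\log\bigl(e+\frac{1}{M_1(f_0)}\int_0^a y^2f_s(\dd y)\bigr)$, leading to $\dot m_a\lesssim m_a\log(e+m_a)$, which by the Osgood criterion $\int^\infty\frac{\dd u}{u\log(e+u)}=\infty$ integrates globally and yields the uniform-in-$a$ bound on every $[0,T]$.

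Your part (ii), by contrast, is correct and takes a genuinely different route from the paper --- one that is in fact stronger. The paper adapts \cite[Proposition 2.3]{fl}: it tests \eqref{we} against $\psi_a(x)=x\indiq_{\{x\leq a\}}\int_x^\infty\frac{\dd u}{\frac u2\log^\alpha(e+\frac u2)}$, checks $K(x,y)\Delta_{\psi_a}(x,y)\leq-xy\indiq_{\{x,y\leq a\}}$, and deduces $\int_0^t[M_1(f_s)]^2\dd s\leq C$; since $\psi_a(x)\sim 2x|\log x|$ as $x\to0$, the finiteness of $\int_0^\infty\psi_a(x)f_0(\dd x)$ is precisely where the hypothesis $\int_0^1 x|\log x|\indiq_{\{x<1\}}f_0(\dd x)<\infty$ enters. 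You instead keep the test functions $x\land a$ of Theorem \ref{mr} but harvest the dissipation on the half-plane $\{x,y\geq a\}$ rather than on the box $[a,ra]^2$, which the factor $x+y$ turns into the rate $a\log^\alpha(e+a)\Phi_s(a)G_s(a)$; the layer-cake bound $\Phi_s(a)\geq V_s(a)=\int_a^\infty G_s(b)\dd b$ and the integration by parts $\int_{x_0}^\infty V_s(-V_s')\dd a=\frac12 V_s(x_0)^2$ (licit because $V_s(R)=\int_0^\infty(x-R)_+f_s(\dd x)\to0$ as $R\to\infty$ whenever $M_1(f_s)<\infty$, true on $[0,T_{gel})$) recover $\frac12 B_s^2$ against the weight $[a\log^\alpha(e+a)]^{-1}$, integrable exactly when $\alpha>1$. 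I checked the steps, including the Tonelli step and the key inequality $a\log^\alpha(e+a)\int_0^t\Phi_s(a)G_s(a)\dd s\leq M_1(f_0)$; they are sound. Your closing observation is also correct and worth emphasizing: your proof removes the small-mass hypothesis of the statement entirely, and yields the explicit bound $T_{gel}\leq 2M_1(f_0)\kappa_\alpha\beta^{-2}$ with $\kappa_\alpha=\int_{x_0}^\infty[a\log^\alpha(e+a)]^{-1}\dd a$, in the quantitative spirit of Theorem \ref{mr}.
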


\begin{proof} We first prove point (i) in three steps.\vip

{\it Step 1.} For any nonnegative measure $f$ on $(0,\infty)$,
any $a>0$, any $A\geq M_1(f)$,
$$
\int_0^a y \log(e+y) f(\dd y) \leq A \log\Big(e+\frac1A\int_0^a y^2 f(\dd y)\Big).
$$
Indeed, the Jensen inequality applied to the probability measure 
$\mu_a(\dd y)=Z_a^{-1}y\indiq_{\{y \in (0,a]\}}f(\dd y)$,
with $Z_a=\int_0^a y f(\dd y)$, and the concave function $\log(e+y)$, directly gives
$$
\int_0^a y \log(e+y) f(\dd y) \leq Z_a \log\Big(e+\frac1{Z_a}\int_0^a y^2 f(\dd y)\Big).
$$
It then suffices to observe that $Z_a \leq M_1(f)\leq A$ and that for $b>0$ fixed,
the function $\varphi(z)= z \log(e+z^{-1}b)$ is nondecreasing on $[0,\infty)$,
because $\varphi''(z)=\frac{-b^2}{z(b+ez)^2}\leq 0$ and $\lim_{z\to \infty}\varphi'(z)=0$.

\vip
{\it Step 2.} We next show that $\sup_{[0,T]} M_2(f_t)<\infty$ for all $T>0$.
We apply \eqref{we} with $\psi_a(x)=x^2 \indiq_{\{x\leq a\}}$. Observing that
$\Delta_{\psi_a}(x,y) \leq \indiq_{\{x+y\leq a \}} 2xy\leq 2  \indiq_{\{x\leq a,y\leq a \}} xy$, we find
\begin{align*}
\int_0^a x^2 f_t(\dd x) \leq& \int_0^a x^2 f_0(\dd x) + \int_0^t \int_0^a\int_0^a
xy (x+y)\log(e+x\land y) f_s(\dd y)f_s(\dd x)\dd s\\
\leq & M_2(f_0)+ 2 \int_0^t \int_0^a\int_0^a x^2 y\log(e+y) f_s(\dd y)f_s(\dd x)\dd s
\end{align*}
by symmetry. Using Step 1 with $A=M_1(f_0)\geq M_1(f_s)$, recall Remark \ref{tr}, we find
\begin{align*}
\int_0^a x^2 f_t(\dd x) \leq& M_2(f_0)+ 2M_1(f_0) \int_0^t \Big(\int_0^ax^2 f_s(\dd x)\Big) 
\log\Big(e+ \frac1{M_1(f_0)} \int_0^a y^2 f_s(\dd y)\Big)\dd s.
\end{align*}
This classically implies that for any $T>0$, there is a constant $C_T$ depending only on
$T$, $M_2(f_0)$ and $M_1(f_0)$, such that $\sup_{[0,T]} \int_0^a x^2 f_t(\dd x)\leq C_T$.
The conclusion follows, letting $a\to \infty$ by monotone convergence.
\vip

{\it Step 3.} Exactly as in Step 5 of the proof of Proposition \ref{log1}, the only difficulty 
to show that $M_1(f_t)=M_1(f_0)$ for all $t\geq 0$ is to show \eqref{tbru}. But,
since $\log(e+u)\leq 1+u$ for all $u\geq 0$,
$$
(x\land y) K(x,y) \leq 2xy \log(e+x\land y)\leq 2xy(1+x\land y) \leq 2(x^2+x) y.
$$
Hence for all $t>0$,
$$
\int_0^t \int_0^\infty\int_0^\infty (x\land y) K(x,y) f_s(\dd y)f_s(\dd x)\dd s \leq 2\int_0^t (M_2(f_s)+M_1(f_s))
M_1(f_s)\dd s <\infty
$$ 
by Step 2 and Remark \ref{tr}.

\vip

Point (ii) does not follow from Theorem \ref{mr}.
The proof below, adapted from \cite[Proposition 2.3]{fl}, is completely different
and generally less powerful, since it allows to prove gelation when $K(x,y)=x y^{\gamma-1}+ x^{\gamma-1} y$ 
with $\gamma>1$,
but not when $K(x,y)=(x y)^{\gamma/2}$ with $\gamma\in (1,2)$ neither when $K(x,y)=x^\alpha y^\beta+x^\beta y^\alpha$
with $\alpha,\beta \in (0,1)$ and $\gamma=\alpha+\beta>1$.
We fix $a>0$ and consider 
$$
\psi_a(x)=x \indiq_{\{x\leq a\}}\int_x^\infty \frac{\dd u}{\frac u2\log^\alpha(e+\frac u2)}\geq 0.
$$
Using that $\alpha>1$, one can check that $\psi_a(x)\leq \kappa x (1+|\log x|\indiq_{\{x<1\}})$,
for some constant $\kappa$ not depending on $a$.
We have $\Delta_{\psi_a}(x,y)\leq 0$ if $x>a$ or $y>a$, so that for all $x,y>0$,
\begin{align*}
\Delta_{\psi_a}(x,y) \leq& \indiq_{\{x,y\leq a\}} 
\Big((x+y)\int_{x+y}^\infty \frac{\dd u}{\frac u2 \log^\alpha(e+\frac u2)}-
x\int_x^\infty \frac{\dd u}{\frac u2 \log^\alpha(e+\frac u2)} 
-y\int_y^\infty \frac{\dd u}{\frac u2 \log^\alpha(e+\frac u2)} \Big)\\
= & -\indiq_{\{x,y\leq a\}} \Big(x\int_x^{x+y} \frac{\dd u}{\frac u2 \log^\alpha(e+\frac u2)} 
+y\int_y^{x+y} \frac{\dd u}{\frac u2 \log^\alpha(e+\frac u2)} \Big)\\
\leq & -\indiq_{\{x, y\leq a\}} (x\land y) \int_{x\land y}^{x+y} \frac{\dd u}{\frac u2 \log^\alpha(e+\frac u2)}\\
\leq & -\indiq_{\{x, y\leq a\}} (x\land y) \int_{x\land y}^{2(x\land y)} \frac{\dd u}{\frac u2 \log^\alpha(e+\frac u2)}\\
\leq & -\indiq_{\{x,y\leq a\}} \frac{x\land y}{\log^\alpha(e+x\land y)}.
\end{align*}
Thus $K(x,y)\Delta_{\psi_a}(x,y) \leq -\indiq_{\{x,y\leq a\}} (x\land y)(x+y)\leq -\indiq_{\{x,y\leq a\}} xy$,
so that \eqref{we} gives 
$$
\int_0^\infty \psi_a(x)f_t(\dd x) \leq \int_0^\infty \psi_a(x)f_0(\dd x)-\frac12 \int_0^t \int_0^a \int_0^a 
xy f_s(\dd y)f_s(\dd x) \dd s.
$$
Since the left hand side is nonnegative and  $\psi_a(x)\leq \kappa x (1+|\log x|\indiq_{\{x<1\}})$, we conclude that 
$$
\int_0^t \int_0^a \int_0^a xy f_s(\dd y)f_s(\dd x) \dd s
\leq 2\kappa \int_0^\infty x(1+|\log x|\indiq_{\{x<1\}}) f_0(\dd x) =:C.
$$
Letting $a\to \infty$, we end with
$\int_0^t [M_1(f_s)]^2 \dd s \leq C$,
which implies that $T_{gel} \leq C/[M_1(f_0)]^2$.
\end{proof}

\end{document}